\documentclass[a4paper,12pt]{article}

\setlength{\textwidth 6.6in} 
\setlength{\textheight 8.8in}
\voffset -0.7in \hoffset -0.6in
\usepackage{amsthm}
\usepackage{mathrsfs}
\usepackage{palatino}
\usepackage{amsfonts,amsmath,amssymb,amscd}

\usepackage{epsfig}

\usepackage{euscript}
\usepackage{amscd}
\usepackage{amsthm}
\usepackage{theoremref}
\usepackage{tikz-cd}
\DeclareMathAlphabet{\mathpzc}{OT1}{pzc}{m}{it}

\newtheorem{thm}{Theorem}[section]

\newtheorem{lem}[thm]{Lemma}
\newtheorem{prop}[thm]{Proposition} 

\newtheorem{rem}[thm]{Remark}

\newcommand{\m}{\mathpzc{m}}
\newcommand{\p}{\mathpzc{p}}

\newcommand{\bQ}{\mathbb Q}

\newcommand{\A}{\mathbb A}

\newcommand{\Spec}{\operatorname{Spec}}

\newcommand{\td}{\operatorname{tr.deg}}

\voffset -0.7in \hoffset -0.6in

\title{On the triviality of an $\A^2$-fibration over a DVR}
\author{Parnashree Ghosh$^*$ and Neena Gupta$^{**}$\\
	{\small{\it  Theoretical Statistics and Mathematics  Unit, Indian Statistical Institute,}}\\ 
	{\small{\it 203 B.T.Road, Kolkata-700108, India}}\\
	{\small{\it e-mail : $^*$parnashree$\_$r@isical.ac.in, ghoshparnashree@gmail.com}}\\
	{\small{\it e-mail : $^{**}$neenag@isical.ac.in, rnanina@gmail.com}}
}

\begin{document}
	
	\date{}
	\maketitle
	\abstract{In this paper we show that any $\mathbb{A}^2$-fibration over a discrete valuation ring which is also an $\mathbb{A}^2$-form is necessarily a polynomial ring. Further we show that separable $\mathbb{A}^2$-forms over PIDs are trivial.
	
	\smallskip
	
	\noindent
	{\small {{\bf Keywords}. Polynomial ring, $\mathbb{A}^2$-fibration,  $\A^2$-form, discrete valuation ring.}}
		\smallskip
		
	\noindent
	{\small {{\bf 2020 MSC}. Primary: 14R25, Secondary: 12F10, 13B25, 13F20, 13F10}}

	\section{Introduction}
 Throughout this paper by a `ring' we will mean a commutative ring with identity and by an `algebra' we will mean a commutative algebra.

 Let $R$ be a ring and $A$ an $R$-algebra. 
 The notation $R^{[n]}$ denotes a polynomial ring in $n$ variables over $R$ and for $\p \in \Spec(R)$,  $\kappa(\p)$ denotes the residue field $\frac{R_{\p}}{\p R_{\p}}$,
  and $A_{\p}$ denotes the ring $S^{-1}A$, where $S:=R \setminus \p$.
  As in \cite{S}, we shall call a finitely generated flat $R$-algebra $A$ to be an $\mathbb{A}^n$-fibration if $A \otimes_R \kappa(\p)=\kappa(\p)^{[n]}$ for every $\p \in \Spec(R)$. 

Now let $(R, \pi)$ be a discrete valuation ring (DVR) with residue field $\kappa:= R/\pi R$ and field of fractions $K:= R[1/\pi]$. Suppose  that $A$ is an $R$-algebra such that 
\begin{enumerate}
\item[\rm(i)] $A$ is $R$ flat.
\item[\rm(ii)] $A\otimes_R K (=A[1/\pi])=K^{[2]}$ and
\item[\rm(iii)] $A\otimes_R \kappa (=A/\pi A) = \kappa^{[2]}$.
\end{enumerate}
By a result of T. Asanuma (\cite[Theorem 3.1]{A}), it follows that $A$ is a finitely generated $R$-algebra, and hence $A$ is an $\A^2$-fibration. 
A natural question is whether $A=R^{[2]}$. This question was answered affirmatively by A. Sathaye in \cite{S} under the assumption that $R$ contains the field of rational numbers $\bQ$.

On  the other hand, T. Asanuma constructed an example of an algebra $A$ over a DVR $R$ which does not contain $\bQ$, satisfies (i), (ii) and (iii) but is not $R^{[2]}$ \cite[Theorem 5.1]{A}. 
Therefore it is natural to ask the following question:

\medskip
\noindent
{\bf Question 1.} Let $(R,\pi)$ be a DVR not containing $\bQ$  and $A$ an $\mathbb{A}^2$-fibration over $R$. What additional condition will ensure that $A=R^{[2]}$?

\medskip
 In section 2 of this paper we prove the following result (\thref{trivialfib}):

\medskip
\noindent
{\bf Theorem A.}
Let $(R,\pi)$ be a DVR and $A$ an $\mathbb{A}^2$-fibration over $R$. 
Let $K:=R[\frac{1}{\pi}]$, $L$ be a finite field extension of $K$ and $\tilde{R}$ be the integral closure of $R$ in $L$.
If $A \otimes_R \tilde{R} = \tilde{R}^{[2]}$, then $A=R^{[2]}$.

\medskip

We have given another important criterion for an $\mathbb{A}^2$-fibration over a DVR to be trivial.
Before stating the result, we discuss the concept of $\mathbb{A}^n$-form over a ring. 

Let $B$ be a ring containing a field $k$ and $A$ be a $B$-algebra. 
 We say that $A$ is an {\it $\mathbb{A}^n$-form over $B$} with respect to $k$, if $A\otimes_k \overline{k}= (B\otimes_k \overline{k})^{[n]}$, where $\overline{k}$ is an algebraic closure of $k$.
 If, in addition, $\overline{k}$ is a separable field extension of $k$, then we call $A$ a {\it separable $\mathbb{A}^n$-form} over $B$ with respect to $k$.
    
 When $B=k$, then it is well known that separable $\A^1$-forms over $k$ are necessarily polynomial rings (see \cite[Lemma 5]{DA1}).
 Further, T. Kambayashi has shown the following (\cite[Theorem 3]{K}): 
 
 \medskip
 \noindent
 {\bf Theorem B.}
 Let $k$ be a field and $A$ be a separable $\A^2$-form over $k$.  Then $A=k^{[2]}$.
 
 \medskip
 
However, there are plenty of examples of non-trivial $\A^1$-forms (i.e., $\A^1$-forms which are not polynomial rings) over non-perfect fields (\cite{insA1}).
  
  In \cite{DA1}, A. K. Dutta proved the following result:
  
  \medskip
  \noindent
  {\bf Theorem C.}
  Let $k$ be a field of characteristic zero, $R$ be a PID containing $k$ and $A$ be an $\A^2$-form over $R$ with respect to $k$. Then $A=R^{[2]}$. 
  
  \medskip
  However if $k$ is a non-perfect field, then using non-trivial $\mathbb{A}^1$-forms over $k$, we can construct examples of non-trivial $\mathbb{A}^2$-forms over PIDs.
  Therefore we ask the following question:
  
  \medskip
  \noindent
  {\bf Question 2.}
  Let $R$ be a PID containing a field $k$ of arbitrary characteristic and $A$ be an $\mathbb{A}^2$-form over $R$ with respect to $k$. What additional condition will ensure $A=R^{[2]}$?

\medskip
 In section 2 of this paper we prove the following result (\thref{corform}):
 
 \medskip
 \noindent
 {\bf Theorem D.}
 	Let $k$ be a field and $R$ be a PID which contains $k$.
 	 If $A$ is an $\mathbb{A}^{2}$-fibration over $R$ as well as an $\mathbb{A}^2$-form over $R$ with respect to $k$, then $A=R^{[2]}$.

\medskip
	
	Note that the above result gives an answer to both Questions 1 and 2. Furthermore, as an application of Theorem D, we prove the following (\thref{perfect}):
	
	\medskip
	\noindent
	{\bf Theorem E.}
	Let $k$ be a field and $R$ be a PID containing $k$ and $A$ be a separable $\mathbb{A}^2$-form over $R$ with respect to $k$. Then $A=R^{[2]}$. 
	
	\medskip
	
	 In particular, Theorem E extends Theorems B and C to any PID containing an arbitrary field.

	\section{Main Theorems}

We first state some results which will be used in this paper. The following is a well known result (\cite[2.11]{AEH}).

\begin{thm}\thlabel{aeh}
	Let $k$ be a field and $R$ be a normal domain such that $k \subset R \subset k^{[n]}$. If $\td_{k} R=1$, then $R = k^{[1]}$.
\end{thm}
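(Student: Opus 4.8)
The plan is to realize $\Spec R$ as an open subscheme of a smooth projective curve over $k$, to show that this curve must be $\mathbb{P}^1_k$, and to conclude that $\Spec R=\mathbb{A}^1_k$. First one reduces to the case where $R$ is finitely generated over $k$. Let $K$ be the fraction field of $R$, so $k\subseteq K\subseteq k(x_1,\dots,x_n)$ and $\td_k K=1$. Since $\td_k K\le 2$, Zariski's finiteness theorem shows that $R'':=K\cap k^{[n]}$ is a finitely generated $k$-algebra; it is normal (an element of $K$ integral over $R''$ is integral over $k^{[n]}$, hence lies in $k^{[n]}\cap K=R''$), has $\td_k R''=1$, and contains $R$. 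If the finitely generated case is known, then $R''=k^{[1]}$; as $R$ contains a nonconstant element $g$ and $k^{[1]}$ is module-finite over $k[g]\subseteq R$, the ring $R$ is itself finitely generated over $k$, so $R=k^{[1]}$ as well. It therefore suffices to show: if $A$ is a finitely generated normal $k$-domain with $\td_k A=1$ that embeds as a $k$-subalgebra of $k^{[m]}$, then $A=k^{[1]}$.

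Let such an $A$ be given, put $C:=\Spec A$, and let $K$ now denote the fraction field of $A$. Then $C$ is a regular affine curve over $k$ and $K$ is a subfield of the purely transcendental field $k(x_1,\dots,x_m)$, so $K/k$ is separable and $k$ is algebraically closed in $K$; that is, $K/k$ is a regular extension of transcendence degree $1$. Hence $C$ is an open subscheme of a unique smooth projective, geometrically integral curve $\overline{C}$ over $k$. Base-changing the inclusion $K\hookrightarrow k(x_1,\dots,x_m)$ along $\overline{k}$ (legitimate because $K/k$ is regular) exhibits $\overline{C}_{\overline{k}}$ as a unirational curve over $\overline{k}$, so by L\"uroth's theorem $\overline{C}_{\overline{k}}\cong\mathbb{P}^1_{\overline{k}}$ and therefore $\overline{C}$ has genus $0$. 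On the other hand, the embedding $A\subseteq k^{[m]}$ gives a dominant $k$-morphism $\varphi\colon\mathbb{A}^m\to C$, which sends the origin to a $k$-rational point of $C$. A genus-zero curve with a rational point is a projective line, so $\overline{C}\cong\mathbb{P}^1_k$.

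Thus $C=\mathbb{P}^1_k\setminus\{q_1,\dots,q_r\}$ for finitely many closed points $q_i$, with $r\ge 1$ since $C$ is affine. The units of $k^{[m]}$ are the nonzero constants, so $A^{*}=k^{*}$; but on $\mathbb{P}^1_k$ every degree-zero divisor supported on $\{q_1,\dots,q_r\}$ is the divisor of a unit of $A$, and these units modulo $k^{*}$ form a free abelian group of rank $r-1$, forcing $r=1$. Finally, restrict $\varphi$ to a suitably chosen closed copy $\ell\cong\mathbb{A}^1$ of the affine line in $\mathbb{A}^m$ on which $\varphi$ is nonconstant --- a general line when $k$ is infinite, and a monomial curve $t\mapsto(t,t^{N_2},\dots,t^{N_m})$ with the $N_i$ sufficiently large in general. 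The resulting nonconstant morphism $\mathbb{A}^1\to C\subseteq\mathbb{P}^1_k$ extends to a finite surjection $\mathbb{P}^1_k\to\mathbb{P}^1_k=\overline{C}$ that carries $\mathbb{A}^1$ into $C$; hence the single missing point $q_1=\overline{C}\setminus C$ is the image of the $k$-rational point at infinity of $\ell$, and so $q_1$ is $k$-rational. Therefore $C\cong\mathbb{A}^1_k$ and $A=k^{[1]}$.

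I expect the crux to be the identification of $K$ with a rational function field over $k$ --- the one-dimensional ``L\"uroth'' phenomenon that a unirational curve over an algebraically closed field is rational, together with the invariance of genus and of the finite boundary set under base change. Regularity of the extension $K/k$ (which is automatic once $K\subseteq k(x_1,\dots,x_n)$) is precisely what makes the passage to $\overline{k}$ and back legitimate, and a separate point is that the unique point of $\mathbb{P}^1_k$ removed to form $\Spec R$ must be $k$-rational --- this is where the ambient polynomial ring enters a second time, through the restriction of $\varphi$ to a line. Everything else (finite generation via Zariski's theorem, the unit computation on $\mathbb{P}^1$, extending a map from a line to its completion) is routine.
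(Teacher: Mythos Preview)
The paper does not prove this statement; it records it as a well-known result and cites Abhyankar--Eakin--Heinzer \cite[2.11]{AEH}. Your argument supplies a correct self-contained proof along geometric lines: reduce to the finitely generated case via Zariski's finiteness theorem, compactify $\Spec A$ to a smooth projective geometrically integral curve, use L\"uroth over $\overline{k}$ to see that the genus is zero, use the image of the origin of $\mathbb{A}^m$ as a rational point to identify the compactification with $\mathbb{P}^1_k$, use the triviality of units in $k^{[m]}$ to force exactly one boundary point, and finally restrict the dominant map $\mathbb{A}^m\to C$ to an embedded $\mathbb{A}^1$ to conclude that this boundary point is $k$-rational.

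Two minor remarks. First, when you base-change the inclusion $K\hookrightarrow k(x_1,\dots,x_m)$ along $\overline{k}$, the target $k(x_1,\dots,x_m)\otimes_k\overline{k}$ is a domain but not a field; it does, however, embed in $\overline{k}(x_1,\dots,x_m)$, which is all you need for the L\"uroth step. Second, the case split for producing a nonconstant map $\mathbb{A}^1\to C$ can be avoided: pick any nonconstant $g\in A\subseteq k[x_1,\dots,x_m]$ and use the $k$-algebra map $x_i\mapsto t^{N^{i-1}}$ with $N>\deg g$, so that distinct monomials of $g$ go to distinct powers of $t$. The original Abhyankar--Eakin--Heinzer argument is more valuation-theoretic, but your geometric approach is a standard and entirely valid alternative.
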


The following theorem was proved by Bass, Connell and Wright in \cite{bcw}, and independently by Suslin in \cite{su}.

\begin{thm}\thlabel{bcw}
	Let $A$ be a ring and $B$ be a finitely presented $A$-algebra. Suppose that there exists $n \in \mathbb{N}$ such that $B_{\m}={A_{\m}}^{[n]}$ for every maximal ideal $\m$ of $A$.
	 Then $B \cong Sym_{A}(M)$ for some finitely generated projective $A$-module $M$ of rank $n$.
\end{thm}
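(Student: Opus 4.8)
I would prove this by the method of Bass--Connell--Wright, in three stages: first promote the pointwise hypothesis to Zariski-local triviality of $\Spec(B)\to\Spec(A)$, then reduce to a Noetherian base, and finally show that such an $\A^n$-bundle is linearizable. \textbf{Stage 1 (spreading out).} The first step is to upgrade ``$B_\m\cong A_\m^{[n]}$ for every maximal ideal $\m$'' to the existence of finitely many $f_1,\dots,f_r\in A$ generating the unit ideal with $B_{f_i}\cong A_{f_i}^{[n]}$. Given $\m$, take $b_1,\dots,b_n\in B_\m$ to be the images of a coordinate system under some isomorphism $A_\m^{[n]}\cong B_\m$, clear denominators so that $b_1,\dots,b_n\in B_f$ for a suitable $f\notin\m$, and form the $A_f$-algebra map $\varphi\colon A_f^{[n]}\to B_f$ sending the variables to the $b_i$. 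Both sides are finitely presented $A_f$-algebras (here finite presentation of $B$ is used) and $\varphi$ becomes an isomorphism after localizing at $\m$; by the usual limit argument, a homomorphism of finitely presented algebras that is an isomorphism after one localization is already an isomorphism after inverting one more element $g\notin\m$. Since the basic opens $D(fg)$ obtained this way cover $\Spec(A)$, quasi-compactness gives a finite subcover.

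\textbf{Stage 2 (reduction to a Noetherian base).} Writing $A$ as the filtered colimit of its finitely generated $\bZ$-subalgebras and noting that $B$, the isomorphisms $B_{f_i}\cong A_{f_i}^{[n]}$, and a relation $\sum h_if_i=1$ each involve only a finite amount of data, the whole configuration descends to a finitely generated $\bZ$-algebra $A_0$, which is Noetherian of finite Krull dimension, and it suffices to produce the desired module over $A_0$. At the same time, the local description shows that $\Omega_{B/A}$ is a finitely presented, locally free $B$-module of rank $n$, hence a finitely generated projective $B$-module of rank $n$; equivalently $\Spec(B)\to\Spec(A)$ is smooth of relative dimension $n$, a fact that will be used to understand its fibres.

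\textbf{Stage 3 (linearization).} Over the Noetherian base, $\Spec(B)\to\Spec(A)$ is a Zariski-locally trivial $\A^n$-bundle, and I would prove by induction on $n$ that it is $A$-isomorphic to $\Spec(\operatorname{Sym}_A(M))$ for a finitely generated projective $A$-module $M$ of rank $n$. This splits into two parts. (a) Produce a global section $\epsilon\colon B\to A$: local sections exist, and they are patched into a global one by an induction on $n$ in which, at each stage, the rank-$(n-1)$ case of the theorem is applied to a suitable sub-bundle. (b) With $\epsilon$ fixed, set $I:=\ker\epsilon$ and $M:=I/I^2$. Since a section of a smooth morphism is a regular immersion, $I$ is locally generated by a length-$n$ regular sequence, so $M$ is finitely generated projective of rank $n$ and the canonical map $\operatorname{Sym}_A(M)\to\gr_I B$ is an isomorphism; it then remains to split the $I$-adic filtration on $B$ compatibly with its algebra structure, and the successive obstructions to this lie in cohomology groups of coherent sheaves on the affine scheme $\Spec(A)$, hence vanish, the finiteness from Stage 2 ensuring that the process converges to a genuine isomorphism $B\cong\operatorname{Sym}_A(M)$.

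The whole difficulty lies in Stage 3 --- constructing the section, and then stripping off the higher-order part of the transition automorphisms. Stages 1 and 2 are routine spreading-out and descent, but already the rank-one case of Stage 3 is substantial, and the inductive argument in (a)--(b) is delicate precisely because the bundle structure and the chosen section must be kept under simultaneous control as the dimension of the base is cut down.
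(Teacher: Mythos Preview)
The paper does not give its own proof of this theorem; it is stated as a known background result with citations to Bass--Connell--Wright and Suslin, and no argument is supplied. So there is no proof in the paper to compare your proposal against.

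For what it is worth, your outline follows the broad architecture of the Bass--Connell--Wright argument: spread out the pointwise hypothesis to a finite Zariski cover (Stage~1), descend to a Noetherian base of finite Krull dimension (Stage~2), and then linearize (Stage~3). Stages~1 and~2 are standard and correctly described. Stage~3 is, as you acknowledge, where the substance lies, and your description there is more impressionistic: in particular, the patching of local sections ``by an induction on $n$ applied to a suitable sub-bundle'' is not a step one can carry out without saying what the sub-bundle is and why the inductive hypothesis applies to it; in the original paper the induction is organized on the Krull dimension of the base, combined with a lifting lemma along nilpotent extensions, rather than on the rank $n$. Your part~(b) --- taking $M=I/I^2$ for the augmentation ideal and splitting the $I$-adic filtration using vanishing of coherent cohomology on affines --- is correct in outline. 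Since the paper treats the whole statement as a black box, any proof sketch here already goes well beyond what the paper does.
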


The next result is due to Asanuma (\cite[Corollary 4.9]{A}). 
 The following version can be found in \cite[Theorem 2.7]{BDlin}.

\begin{thm}\thlabel{asa}
Let $(B,p)$ be a DVR, $\kappa=B/p B$ and $L=B\left[\frac{1}{p}\right]$. Let $X_0,Y_0 \in B[X,Y] (=B^{[2]})$ and $\overline{X_0}$, $\overline{Y_0}$ denote the images of $X_0,Y_0$ in $\kappa[X,Y]$. Suppose that $\overline{X_0},\overline{Y_0} \notin \kappa$ and $L[X_0,Y_0]=L[X,Y]$. Then either $\deg \overline{X_0} \mid \deg \overline{Y_0}$ or 	$\deg \overline{Y_0} \mid \deg \overline{X_0}$, where $\deg$ means the total degree in $X,Y$. 
\end{thm}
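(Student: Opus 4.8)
The plan is to pass to the $L$-automorphism $\varphi=(X_0,Y_0)$ of $L[X,Y]$ and induct on $N:=\deg_L X_0+\deg_L Y_0$, where $\deg_L$ denotes total degree in $X,Y$ over $L$; the two tools are the structure theory of $\Aut_L(L[X,Y])$ and the $p$-adic Gauss valuation, which I will use to go back and forth between $L[X,Y]$ and its reduction. Concretely, equip $L[X,Y]$ with the Gauss valuation $v$ extending the $p$-adic valuation of $L$; since its residue ring is the domain $\kappa[X,Y]$, $v$ is a valuation, its valuation ring is $B[X,Y]$, and any $f\in B[X,Y]$ reduces to $\overline f\in\kappa[X,Y]$, which is nonzero exactly when $v(f)=0$. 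Since $\overline{X_0},\overline{Y_0}$ are nonzero we get $v(X_0)=v(Y_0)=0$, and $\deg_L X_0,\deg_L Y_0\ge 1$ because $X_0,Y_0$ generate $L[X,Y]$ over $L$. For the base case, and more generally whenever $\min(\deg_L X_0,\deg_L Y_0)=1$, say $X_0$ is $L$-linear, the coefficients of $X_0$ already lie in $B$, so $\overline{X_0}$ is a nonzero linear polynomial, $\deg\overline{X_0}=1$, and $1\mid\deg\overline{Y_0}$.

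For the inductive step I would assume $\min(\deg_L X_0,\deg_L Y_0)\ge 2$, so $\varphi$ is non-affine, and (the statement being symmetric) $\deg_L X_0\le\deg_L Y_0$. By the Jung--van der Kulk theorem, $\deg_L X_0\mid\deg_L Y_0$, say $\deg_L Y_0=m\deg_L X_0$, the leading form $H$ of $X_0$ is a scalar multiple of the $(\deg_L X_0)$-th power of a linear form, and the leading form of $Y_0$ is $cH^{m}$ for some $c\in L^{*}$. Taking $s\ge 0$ minimal with $p^{s}c\in B$ and setting $Y_1:=p^{s}Y_0-(p^{s}c)X_0^{m}\in B[X,Y]$, the pair $(X_0,Y_1)$ is again an automorphism of $L[X,Y]$ with $\deg_L X_0+\deg_L Y_1<N$, so the inductive hypothesis will apply to it. The favourable case is $\deg\overline{X_0}=\deg_L X_0$ (no degree loss in $X_0$ modulo $p$): then $v(H)=0$, hence $v(c)\ge0$ because $cH^{m}$ has coefficients in $B$, so $s=0$ and $\overline{Y_1}=\overline{Y_0}-\overline c\,\overline{X_0}^{\,m}$. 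If $\overline c=0$ then $\overline{Y_1}=\overline{Y_0}$ and the inductive hypothesis for $(X_0,Y_1)$ gives the desired divisibility for $(\overline{X_0},\overline{Y_0})$. If $\overline c\ne0$ then the homogeneous component of $Y_0$ of degree $m\deg_L X_0$ is $cH^{m}$, whose reduction $\overline c\,\overline H^{\,m}$ is nonzero, so $\overline{Y_0}$ also keeps its degree and $\deg\overline{Y_0}=m\deg_L X_0=m\deg\overline{X_0}$. In both sub-cases the induction closes.

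The remaining case, $\deg\overline{X_0}<\deg_L X_0$, is where I expect the real difficulty to lie: now $v(H)>0$, $v(c)$ may be negative, $\overline{Y_1}$ need not have the same degree as $\overline{Y_0}$, and the elementary reduction loses control of the reductions' degrees. To treat it one must use that a degeneration of $L^{[1]}$ over the DVR $B$ is highly constrained --- $B[X,Y]/(X_0)$ is $B$-flat with generic fibre $L^{[1]}$, so the special fibre $\kappa[X,Y]/(\overline{X_0})$ is a very special plane curve, and similarly for $Y_0$ --- together with the observation that a $\operatorname{Tor}_1^{B}$-computation identifies the ideal of relations of $(\overline{X_0},\overline{Y_0})$ over $\kappa$ with the $p$-torsion of $B[X,Y]/B[X_0,Y_0]$: so either $B[X_0,Y_0]=B[X,Y]$, in which case $(\overline{X_0},\overline{Y_0})$ is an automorphism of $\kappa[X,Y]$ and Theorem B applies, or this ideal is a nonzero principal prime, and comparing top weighted-homogeneous parts (weights $\deg\overline{X_0},\deg\overline{Y_0}$) in a generator forces the leading forms of $\overline{X_0}$ and $\overline{Y_0}$ to be proportional to powers $g^{a},g^{b}$ of one homogeneous polynomial $g$, with $\gcd(a,b)=1$. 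Ruling out $a,b\ge 2$ --- equivalently, getting $\deg\overline{X_0}\mid\deg\overline{Y_0}$ or $\deg\overline{Y_0}\mid\deg\overline{X_0}$ --- is precisely the point at which one has to invoke the structural results on coordinates and $\A^1$-fibrations over a DVR (as in \cite{A} and \cite{BDlin}); note also that since $\varphi^{-1}$ need not be defined over $B$, the pair $(\overline{X_0},\overline{Y_0})$ is in general not an automorphism of $\kappa[X,Y]$, so the field case cannot simply be quoted, and this is exactly the extra content of the statement.
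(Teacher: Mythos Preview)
The paper does not prove this theorem at all: it is quoted from Asanuma \cite[Corollary~4.9]{A}, in the formulation of \cite[Theorem~2.7]{BDlin}, and then used as a black box inside the proof of \thref{trivialfib}. So there is no ``paper's own proof'' to compare your attempt against.

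As for your attempt itself, it is not a complete proof. The induction on $\deg_L X_0+\deg_L Y_0$ together with Jung--van der Kulk over $L$ is a reasonable opening, and your ``favourable case'' $\deg\overline{X_0}=\deg_L X_0$ is argued correctly. But the case $\deg\overline{X_0}<\deg_L X_0$ is essentially the whole content of the statement, and there you only sketch: the $\operatorname{Tor}$-argument and the weighted-homogeneous analysis reduce matters to ruling out $a,b\ge 2$ for the exponents in $g^a,g^b$, and at exactly that point you say one ``has to invoke the structural results on coordinates and $\A^1$-fibrations over a DVR (as in \cite{A} and \cite{BDlin})''. Those are precisely the sources from which the theorem is being quoted, so this step is circular, or at best a deferral back to the literature you were meant to reprove. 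One small slip along the way: in the sub-case $B[X_0,Y_0]=B[X,Y]$ the conclusion for $(\overline{X_0},\overline{Y_0})$ follows from Jung--van der Kulk over $\kappa$, not from Theorem~B (Kambayashi's result on separable $\A^2$-forms), which is about a different question.
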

%

We note below an elementary lemma.
\begin{lem}\thlabel{el}
Let $C \subseteq D$ be integral domains and $t \in C$ be such that $D[1/t]=C[1/t]$
and $tD \cap C= tC$. Then $C=D$. 
\end{lem}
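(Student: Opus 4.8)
The plan is to show that every element $d \in D$ already lies in $C$. Since $D[1/t] = C[1/t]$ and $C \subseteq D$, we may write $d = c/t^n$ for some $c \in C$ and some integer $n \geq 0$; equivalently $t^n d = c \in C$. I will run a descending induction on $n$: if $n = 0$ there is nothing to prove, and for the inductive step it suffices to show that whenever $t^n d \in C$ with $n \geq 1$, in fact $t^{n-1} d \in C$.

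For the inductive step, set $c := t^n d = t(t^{n-1} d)$. Then $c \in tD$ (as $t^{n-1}d \in D$) and $c \in C$, so $c \in tD \cap C = tC$ by hypothesis. Hence $c = t c'$ for some $c' \in C$. Now $t c' = t (t^{n-1} d)$ in the domain $D$, and $t \neq 0$ since $D[1/t] = C[1/t]$ makes sense (if $t = 0$ the statement is vacuous or trivial); cancelling $t$ gives $t^{n-1} d = c' \in C$. This completes the induction, so $d \in C$, and therefore $C = D$.

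There is essentially no obstacle here — the only points requiring the slightest care are the base case bookkeeping and the observation that $t$ is a nonzerodivisor (automatic in a domain, which is why the hypothesis specifies integral domains). The two hypotheses are each used exactly once: $D[1/t] = C[1/t]$ to get the initial expression $t^n d \in C$, and $tD \cap C = tC$ at each stage of the descent.
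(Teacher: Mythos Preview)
Your argument is correct and is exactly the standard descent one would give for this elementary fact; the paper itself omits the proof entirely, merely labeling the lemma as elementary, so there is nothing to compare against. The only quibble is your aside about $t=0$: in that case both hypotheses hold vacuously (localizing at $0$ yields the zero ring) yet $C=D$ need not follow, so the lemma tacitly assumes $t\neq 0$ --- which is harmless here since in the paper's application $t=\pi$ is a uniformizer.
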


We now prove Theorem A.
\begin{thm}\thlabel{trivialfib}
	Let $(R,\pi)$ be a DVR and $A$ an $\mathbb{A}^2$-fibration over $R$. 
	 Let $K:=R\left[\frac{1}{\pi}\right]$, $\kappa=\frac{R}{\pi R}$, $L$ a finite field extension of $K$ and $\tilde{R}$ be the integral closure of $R$ in $L$.
	 If $A \otimes_R \tilde{R} = \tilde{R}^{[2]}$, then $A=R^{[2]}$.
\end{thm}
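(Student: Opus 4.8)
The plan is to descend triviality from $\tilde R$ to $R$. Write $S := A\otimes_R \tilde R = \tilde R^{[2]}$. Since $\tilde R$ is the integral closure of $R$ in a finite extension $L$ of $K = R[1/\pi]$, $\tilde R$ is a semilocal PID (a Dedekind domain with finitely many maximal ideals), finite as an $R$-module, and the fraction field of $\tilde R$ is $L$. Inverting $\pi$ gives $S[1/\pi] = (A\otimes_R K)\otimes_K L = K^{[2]}\otimes_K L = L^{[2]}$; this is consistent but gives nothing new. The key geometric inputs I expect to use are: $A$ is an $\mathbb A^2$-fibration over $R$, so by Theorem \ref{bcw} (applied after noting $A$ is finitely presented via Asanuma's theorem) $A = \operatorname{Sym}_R(M)$ for a finitely generated projective $R$-module $M$ of rank $2$; since $R$ is a DVR, $M$ is free, so in fact $A = R^{[2]}$ already! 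Wait — this would make the theorem trivial, so I must be misreading: the hypothesis is that $A\otimes_R\kappa(\mathfrak p) = \kappa(\mathfrak p)^{[2]}$ for all primes, which over a DVR means only the generic and closed fibres, and Theorem \ref{bcw} requires $B_{\mathfrak m}$ to be a polynomial ring, not just the fibres — so \ref{bcw} does not apply directly. The honest starting point is therefore: $A$ is a three-dimensional finitely generated flat $R$-algebra with $A[1/\pi] = K^{[2]}$ and $A/\pi A = \kappa^{[2]}$.

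**Using the $\mathbb A^2$-fibration structure over $\tilde R$ and comparing.**

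First I would observe that $B := A\otimes_R \tilde R$ is an $\mathbb A^2$-fibration over $\tilde R$: flatness is preserved by base change, and for $\mathfrak q\in\Spec(\tilde R)$ lying over $\mathfrak p\in\Spec(R)$ we have $B\otimes_{\tilde R}\kappa(\mathfrak q) = A\otimes_R\kappa(\mathfrak p)\otimes_{\kappa(\mathfrak p)}\kappa(\mathfrak q) = \kappa(\mathfrak q)^{[2]}$. By hypothesis $B = \tilde R^{[2]}$. Now choose a uniformizer $\pi$ of $R$; in $\tilde R$ we have $\pi\tilde R = \mathfrak m_1^{e_1}\cdots\mathfrak m_r^{e_r}$. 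The core of the argument must be a comparison of the coordinate rings. Concretely, write $\tilde R^{[2]} = \tilde R[X,Y]$, so $A\otimes_R\tilde R = \tilde R[X,Y]$, and $A$ sits inside this as $A = A\otimes_R R \subseteq A\otimes_R \tilde R$ (using flatness/faithful flatness of $R\to\tilde R$, this inclusion is genuine and $A = (A\otimes_R\tilde R)\cap(A\otimes_R K)$ inside $A\otimes_R L$). So I must identify $A$ as a specific $R$-subalgebra of $\tilde R[X,Y]$ whose image in $L^{[2]} = L[X,Y]$ is $A[1/\pi] = K^{[2]}$, a polynomial ring in two variables over $K$ contained in $L[X,Y]$.

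**The degree/divisibility dichotomy and descent of the closed fibre.**

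The decisive step, I expect, is to analyze the closed fibre together with Theorem \ref{asa}. Modulo $\pi$: $A/\pi A = \kappa^{[2]}$, and $(A\otimes_R\tilde R)/\pi(A\otimes_R\tilde R) = \tilde R[X,Y]/\pi\tilde R[X,Y] = (\tilde R/\pi\tilde R)[X,Y]$, which is generally \emph{not} reduced. The comparison at $\pi$ should force, via \ref{asa} applied over the DVR $R_{(\pi)}$ — more precisely by choosing a system of coordinates realizing $A[1/\pi] = K[f,g]$ with $f,g\in\tilde R[X,Y]$ suitably normalized — a divisibility relation $\deg\bar f \mid \deg\bar g$ (or vice versa) on the leading forms mod $\pi$. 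Then, exactly as in the proof of Asanuma's corollary, one uses an elementary automorphism of $\tilde R[X,Y]$ over $\tilde R$ to reduce the degrees, inductively arriving at new coordinates $f', g'$ with $\deg\bar f' = \deg\bar g' = 1$, i.e.\ $K[f',g'] = K[X,Y]$ with $f',g'\in\tilde R[X,Y]$ and $\bar f', \bar g'$ linear — but one must do this \emph{compatibly with the $R$-structure}, tracking that the automorphisms descend to $R$. Finally, Lemma \ref{el} is the closer: with $C := A$, $D := A\otimes_R\tilde R \cap K^{[2]}$ suitably arranged (or directly with $C=A$, $D=R^{[2]}$ once a candidate polynomial subring is produced), one checks $D[1/\pi] = C[1/\pi]$ (both equal $K^{[2]}$) and $\pi D\cap C = \pi C$ (this is where flatness of $A$ over $R$ and the identification $A/\pi A = \kappa^{[2]} = D/\pi D$ enter, forcing the inclusion $C\subseteq D$ to be an equality modulo $\pi$), concluding $A = R^{[2]}$.

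**Main obstacle.** The hard part will be making the descent of coordinates from $\tilde R$ to $R$ precise: the elementary automorphisms that trivialize the embedding $K^{[2]}\hookrightarrow L^{[2]}$ naturally live over $\tilde R$ (or $L$), and one must argue that the resulting polynomial subring of $\tilde R[X,Y]$ is the base change of an $R$-subring of $A$ — equivalently, that $A$ itself (not just $A\otimes_R\tilde R$) acquires a coordinate system. I expect this is handled by Galois descent when $L/K$ is separable (averaging over $\Aut(L/K)$ or using that $A = (A\otimes_R\tilde R)^{\mathrm{Gal}}$), with a separate purely-inseparable case treated by a direct Frobenius argument; combining these with the degree bound from \ref{asa} and the faithfully flat descent packaged in \ref{el} is the technical heart of the proof.
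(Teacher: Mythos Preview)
Your proposal identifies the right technical inputs (\thref{asa} and \thref{el}) but assembles them through a descent framework that the paper does not use and that you do not complete. You propose to start from the $\tilde R$-coordinates on $A\otimes_R\tilde R=\tilde R[X,Y]$ and push them down to $R$, invoking Galois descent when $L/K$ is separable and a Frobenius argument otherwise. You correctly flag this as the hard part, and it is a genuine gap: the Galois action on $\tilde R[X,Y]$ is by arbitrary $\tilde R$-automorphisms of the affine plane, not linear ones, so taking invariants need not produce a polynomial ring; and nothing in the statement distinguishes the separable from the inseparable case, so a proof that bifurcates there is already suspect.

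The paper avoids descent altogether and instead runs Sathaye's algorithm \emph{inside $A$}. One chooses $X_1,Y_1\in A$ with $A[1/\pi]=K[X_1,Y_1]$, writes each $R$-algebra generator $t_i$ of $A$ as $q_i(X_1,Y_1)/\pi^{n_i}$ with $q_i\notin\pi R[X_1,Y_1]$, sets $N_{X_1,Y_1}=\sum_i\max(n_i,0)$, and shows that whenever $N>0$ one can modify $(X_1,Y_1)$ within $A$ to lower $N$. Sathaye's reduction step is blocked only in the case $\td_\kappa\kappa[\overline{X_1},\overline{Y_1}]=1$, where one needs a divisibility between the $W$-degrees of $\overline{X_1}$ and $\overline{Y_1}$ in the normalisation $\kappa[W]\subseteq A/\pi A$. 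The hypothesis on $\tilde R$ is used \emph{solely} to certify this divisibility: localise $\tilde R$ at a maximal ideal to obtain a DVR $\tilde R_{\p_i}$ with $A\otimes_R\tilde R_{\p_i}=\tilde R_{\p_i}[u,v]$, observe that $L[X_1,Y_1]=L[u,v]$ on the generic fibre, and apply \thref{asa} over $\tilde R_{\p_i}$; the resulting $(\hat u,\hat v)$-degree divisibility translates back (via $\deg_{(\hat u,\hat v)}\hat W\neq 0$) to the required $W$-degree divisibility over $\kappa$. No coordinates are ever descended from $\tilde R$ --- the extension is only an auxiliary witness. \thref{el} appears only in the easy case $\td=2$, to conclude $A=R[X_1,Y_1]$ directly, not as the final step of a descent argument.
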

\begin{proof}
		For any $a \in A$, let $\overline{a}$ denote the image of $a$ in $\frac{A}{\pi A}$.
	Since $A$ is an $\mathbb{A}^2$-fibration over $R$, there exist $X_1,Y_1,X_0,Y_0 \in A$ such that 
	\begin{equation}\label{K}
		A\left[\frac{1}{\pi}\right] = K[X_1,Y_1] 
	\end{equation}
	and 
	\begin{equation}\label{kp}
		\frac{A}{\pi A} = (R/\pi R)[\overline{X_0},\overline{Y_0}]=\kappa[\overline{X_0},\overline{Y_0}].
	\end{equation}
	Let $A=R[t_1,\ldots,t_m]$. By \eqref{K}, $t_i=\frac{q_i(X_1,Y_1)}{\pi^{n_i}}$ for every $i$, $1 \leqslant i \leqslant m$ and $q_i \in R^{[2]}$ with $q_i \notin \pi R[X_1, Y_1]$.
	Let $N_{X_1,Y_1} :=\sum_{i=1}^{m}\max\{n_i,0\}$. 
	Note that if $N_{X_1,Y_1}=0$, then $A=R[X_1,Y_1]$. 
	
	We show that for every pair of elements $X,Y \in A$ 
	such that $A \otimes_{R} K=K[X,Y]$ with $N_{X,Y}>0$, there exists a modified pair
	$\tilde{X}, \tilde{Y} \in A$ such that $A \otimes_{R} K=K[\tilde{X},\tilde{Y}]$ with $N_{\tilde{X}, \tilde{Y}} < N_{X,Y}$. Then by induction on $N_{X,Y}$, we see that $A=R^{[2]}$.

	If $\td_{\kappa} \kappa [\overline{X_1}, \overline{Y_1}]=2$, then 
	$\frac{R[X_1,Y_1]}{\pi R[X_1,Y_1]} \hookrightarrow \frac{A}{\pi A}$. Now since $A[\frac{1}{\pi}]=R[\frac{1}{\pi}][X_1,Y_1]$, it follows that $A=R[X_1,Y_1]$ by \thref{el}. Clearly in this case $N_{X_1, Y_1}=0$.
	
	We now show that if $\td_{\kappa} \kappa[\overline{X_1}, \overline{Y_1}]$ is zero or one, then there exist $X_2,Y_2 \in A$ with $N_{X_2,Y_2}< N_{X_1,Y_1}$.
    
    If $\td_{\kappa} \kappa[\overline{X_1}, \overline{Y_1}]=0$, then as $\kappa$ is algebraically closed in $\frac{A}{\pi A}=\kappa^{[2]}$, we have $X_1-\lambda, Y_1-\mu \in \pi A$, for some $\lambda, \mu \in R$.  Therefore, for $X_2:= \frac{X_1-\lambda}{\pi}$ and $Y_2:= \frac{Y_1-\mu}{\pi}$, we have $N_{X_2,Y_2}<N_{X_1,Y_1}$, as in \cite[Theorem 1]{S}.
	
	If $\td_{\kappa} \kappa[\overline{X_1}, \overline{Y_1}]=1$, then  by \thref{aeh}, the normalization of $\kappa[\overline{X_1}, \overline{Y_1}]$ in $\frac{A}{\pi A}=\kappa[\overline{X_0}, \overline{Y_0}]$ is $\kappa^{[1]}$. 
	Hence
	\begin{equation}\label{w1}
			\kappa[\overline{X_1}, \overline{Y_1}] \hookrightarrow \kappa[W] \hookrightarrow \frac{A}{\pi A}=\kappa[\overline{X_0}, \overline{Y_0}]
	\end{equation}
    for some $W \in \frac{A}{\pi A}$.
	Let $d_{X_1}:= \deg_{W} \overline{X_1}$ and $d_{Y_1}:= \deg_{W} \overline{Y_1}$.  Now suppose the following holds:
	\begin{equation}\label{fact}
		\text{~Either~}	\min(d_{X_1},d_{Y_1})=0, \text{~or~} d_{X_1},d_{Y_1}>0 \text{~with~}
		d_{X_1} \mid d_{Y_1} \text{~or~} d_{Y_1} \mid d_{X_1}.
	\end{equation}
	 Then, by \cite[Theorem 1]{S}, there exist $X_2,Y_2 \in A$ such that $A \otimes_{R} K=K[X_2,Y_2]$ with $N_{X_2,Y_2}<N_{X_1,Y_1}$. 
	 We now show that \eqref{fact} holds.
	  
	  From the Krull-Akizuki Theorem (\cite[Theorem 11.7]{matr}), it follows that $\tilde{R}$ is a Dedekind domain. 
	  Let  $\p_i$ be a maximal ideal of $\tilde{R}$.
	 Let $\tilde{A}_{\p_i}:=A \otimes_R \tilde{R}_{\p_i}=\tilde{R}_{\p_i}[u,v] (=\tilde{R}_{\p_i}^{[2]})$ for some $u,v \in A \otimes_R \tilde{R}_{\p_i}$. Let $\tilde{\kappa}_i:= \frac{\tilde{R}_{\p_i}}{\p_i \tilde{R}_{\p_i}}$. 
	 For any $a \in \tilde{A}_{\p_i}$, let $\hat{a}$ denotes its image in $\frac{\tilde{A}_{\p_i}}{\p_i \tilde{A}_{\p_i}}$. Note that $\kappa \subseteq \tilde{\kappa}_i$ and 
	 $$
\frac{\tilde{A}_{\p_i}}{\p_i \tilde{A}_{\p_i}}
= A \otimes_R \tilde{\kappa}_i 
= A \otimes_R \kappa \otimes_{\kappa} \tilde{\kappa}_i
= \frac{A}{\pi A} \otimes_{\kappa} \tilde{\kappa}_i
=\kappa[\overline{X_0}, \overline{Y_0}] \otimes_{\kappa} \tilde{\kappa}_i
=\tilde{\kappa}_i[\hat{X_0}, \hat{Y_0}].
	 $$ 
	 Therefore, using $\tilde{A}_{\p_i}=\tilde{R}_{\p_i}[u,v]$, we have 
	 \begin{equation}\label{u}
\frac{\tilde{A}_{\p_i}}{\p_i \tilde{A}_{\p_i}}
=\tilde{\kappa}_i[\hat{X_0}, \hat{Y_0}]
=\tilde{\kappa}_i[\hat{u}, \hat{v}].
\end{equation}	   
     Now, from \eqref{w1}, we have
	 $$
	 \tilde{\kappa}_i[\hat{X_1},\hat{Y_1}] \hookrightarrow \tilde{\kappa}_i[\hat{W}] \hookrightarrow \tilde{\kappa}_i[\hat{X_0},\hat{Y_0}]=\tilde{\kappa}_i[\hat{u},\hat{v}],
	 $$
	 and hence 
	 \begin{equation}\label{deg}
	 	 \deg_{(\hat{u},\hat{v})} \hat{X_1}=\deg_{(\hat{u},\hat{v})} \hat{W}.d_{X_1} \text{~and~}\deg_{(\hat{u},\hat{v})} \hat{Y_1}= \deg_{(\hat{u},\hat{v})} \hat{W}.d_{Y_1}. 
	 \end{equation}
     We note that $\hat{W}$ is transcendental over $\tilde{\kappa}_i$, 
     $\tilde{R}_{\p_i}\left[ \frac{1}{p_i}\right]=L$ and
     $$
     \tilde{A}_{\p_i}\left[\frac{1}{p_i}\right]
     =A \otimes_R \tilde{R}_{\p_i} \otimes_{\tilde{R}_{\p_i}} 
     \tilde{R}_{\p_i}\left[\frac{1}{p_i}\right]= A \otimes_R L= (A \otimes_R K) \otimes_{K} L=K[X_1,Y_1]\otimes_K L=L[X_1,Y_1].
     $$
Therefore, using $\tilde{A}_{\p_i}=\tilde{R}_{\p_i}[u,v]$, we have
\begin{equation}\label{Rpi}
    \tilde{A}_{\p_i}\left[\frac{1}{p_i}\right]=L[X_1,Y_1]=L[u,v].
\end{equation}
	 If $\hat{X_1}$ or $\hat{Y_1}$ is in $\tilde{\kappa}_i$, then from \eqref{deg}, either $d_{X_1}=0$ or $d_{Y_1}=0$.
	  Now suppose that $\min(d_{X_1},d_{Y_1}) \neq 0$.
	  Then $\hat{X_1}, \hat{Y_1} \notin \tilde{\kappa}_i$, and hence, by \eqref{Rpi} and \thref{asa},  
	  $$
	 \text{either~} \deg_{(\hat{u},\hat{v})} \hat{X_1}\mid \deg_{(\hat{u},\hat{v})} \hat{Y_1}
	  \text{~or~} \deg_{(\hat{u},\hat{v})} \hat{Y_1}\mid \deg_{(\hat{u},\hat{v})} \hat{X_1}.
	  $$ 
	  Now using \eqref{deg}, it follows that either $d_{X_1} \mid d_{Y_1}$ or $d_{Y_1} \mid d_{X_1}$ as $\deg_{(\hat{u},\hat{v})} \hat{W} \neq 0$. Hence the desired result follows. 
	\end{proof}

 The following remark will be useful for proving the results in the rest of this section.
 
 \begin{rem}\thlabel{r1}
 	\em{Let $R$ be a ring containg a field $k$ and $A$ be an $R$-algebra. Suppose that $A\otimes_k L =(R \otimes_k L)^{[n]}$ for some field extension $L$ of $k$. Then, by \cite[Lemma 1]{DA1}, it follows that $A$ is a finitely generated $R$-algebra, and hence by \cite[Lemma 3]{DA1}, we have a finitely generated field extension $\tilde{L}$ of $k$ contained in $L$ such that $A\otimes_k \tilde{L} =(R \otimes_k \tilde{L})^{[n]}$.}
 \end{rem}
 
  The next result is the main step to prove Theorem D.
 
\begin{prop}\thlabel{pform2}
	Let $k$ be a field, $L$ a field extension of $k$, $R$ a DVR which contains $k$ and $A$ an $\mathbb{A}^{2}$-fibration over $R$. If $A \otimes_k L = (R \otimes_k L)^{[2]}$, then $A=R^{[2]}$.
\end{prop}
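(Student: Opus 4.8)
The plan is to reduce the base-extension hypothesis $A\otimes_k L=(R\otimes_k L)^{[2]}$ to a statement about a finite extension of the fraction field $K=R[1/\pi]$, and then invoke Theorem \ref{trivialfib}. First, by \thref{r1} we may replace $L$ by a finitely generated field extension $\tilde L$ of $k$ with $A\otimes_k\tilde L=(R\otimes_k\tilde L)^{[2]}$; write $\tilde L$ as a finite extension of a purely transcendental extension $k(z_1,\dots,z_r)$. A standard specialization argument then lets us descend to a finite extension: evaluating the transcendentals $z_1,\dots,z_r$ at suitably generic elements of $k$ (or of a finite extension of $k$, enlarging $k$ if necessary so that it is infinite) preserves the polynomial-ring conclusion on a dense open locus, so after this specialization we obtain a \emph{finite} field extension $k'$ of $k$ with $A\otimes_k k'=(R\otimes_k k')^{[2]}$. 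The delicate point here is ensuring the specialization is compatible with the $R$-algebra structure and that the generic-flatness/openness arguments apply to $A\otimes_k\tilde L$ as an algebra over $R\otimes_k k(z_1,\dots,z_r)$; this is the main obstacle and must be handled carefully, presumably by the same techniques used in \cite{DA1} for Theorem C.

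Next I would analyze the ring $R\otimes_k k'=R'$. Since $k'/k$ is finite and $R$ is a DVR containing $k$, $R'$ is a finite $R$-algebra which is a one-dimensional Noetherian domain (it is a domain because $R'\subseteq R\otimes_k\overline k$ localizes into a field-extension situation — or one argues $R'$ injects into $\operatorname{Frac}(R)\otimes_k k'$ after checking this is a domain; if $k'/k$ is not separable one must be slightly more careful, but $R'$ will still be reduced and its normalization handled by Krull–Akizuki). In fact $R'=R\otimes_k k'$ and, since $k$ is a field and $R$ is $k$-flat, $R'$ is flat over $R$; its fraction ring $R'\otimes_R K=K\otimes_k k'$ is a finite $K$-algebra, hence a product of finite field extensions of $K$, and correspondingly $R'$ is (a localization of) a semilocal Dedekind-like ring. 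Passing to the integral closure $\tilde{R'}$ of $R$ in a factor $L_0=K\otimes_k k'$ (a finite field extension of $K$), Krull–Akizuki gives that $\tilde{R'}$ is a Dedekind domain, and $A\otimes_R\tilde{R'}$ is obtained from $(R\otimes_k k')^{[2]}=R'^{[2]}$ by the corresponding further localization/integral closure, hence is a polynomial ring in two variables over $\tilde{R'}$.

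Finally, localizing $\tilde{R'}$ at one of its maximal ideals $\q$ lying over $\pi R$ yields a DVR $\tilde{R'}_\q$ which is the integral closure of $R$ in the finite field extension $L_0$ of $K$, and $A\otimes_R\tilde{R'}_\q=\tilde{R'}_\q{}^{[2]}$. Now Theorem \ref{trivialfib} applies directly with $\tilde R=\tilde{R'}_\q$ and concludes $A=R^{[2]}$. The only gaps to fill are: (a) the specialization step reducing $L$ to a finite extension $k'$, which I expect to be the genuine technical heart; (b) checking that $R\otimes_k k'$ localizes/normalizes into the integral closure of $R$ in a finite extension of $K$ and that the polynomial-ring property is inherited under these operations, both of which are routine once (a) is in place; and (c) choosing $\q$ over $\pi R$, which exists since $\tilde{R'}\supseteq R$ is integral and $R$ is not a field.
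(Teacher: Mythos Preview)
Your overall strategy---reduce $L$ to a finite algebraic extension of $k$, then pass to the integral closure of $R$ in a finite extension of $K$ and invoke \thref{trivialfib}---is exactly the paper's. But you have made both reduction steps look harder than they are, and in doing so introduced actual gaps.

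For your step (a), no genericity and no infinite-field hypothesis are needed. Since $A$ is finitely generated over $R$, the identity $A\otimes_k L=(R\otimes_k L)^{[2]}$ is witnessed by finitely many elements of $L$; hence it already holds over a finitely generated $k$-subalgebra $E\subseteq L$ (concretely $E=k[x_1,\dots,x_n,1/f,\alpha_1,\dots,\alpha_m]$). Now take \emph{any} maximal ideal $\m$ of $E$: by Zariski's lemma the residue field $\hat k=E/\m$ is a finite extension of $k$, and reducing modulo $\m$ gives $A\otimes_k\hat k=(R\otimes_k\hat k)^{[2]}$. That is the whole specialization argument; there is no ``delicate point'' and no need to evaluate transcendentals at generic values.

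For your step (b), your claims that $R'=R\otimes_k k'$ is a domain, or that $K\otimes_k k'$ is a product of fields, are false in general (take $k'=k(a^{1/p})$ with $a$ already a $p$-th power in $K$). The paper sidesteps this entirely: since $R\hookrightarrow R\otimes_k k'$ is an integral extension of rings, lying-over gives a prime $\p\subset R\otimes_k k'$ with $\p\cap R=(0)$. Then $(R\otimes_k k')/\p$ is a domain, module-finite over $R$, with fraction field $K_2$ finite over $K$; letting $\tilde R$ be the integral closure of $R$ in $K_2$ one has $(R\otimes_k k')/\p\subseteq\tilde R$, hence $A\otimes_R\tilde R=\tilde R^{[2]}$, and \thref{trivialfib} applies directly. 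Your final localization at a maximal ideal $\q$ is unnecessary (and your assertion that $\tilde{R'}_\q$ equals the integral closure of $R$ in $L_0$ is false; it is only a localization thereof), since \thref{trivialfib} already allows $\tilde R$ to be the full Dedekind integral closure.
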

\begin{proof}
	By \thref{r1}, we can assume that
	$L$ is a finitely generated field extension of $k$. 
	
	\smallskip
	\noindent
	{\it Case} 1: Suppose $L$ is algebraic over $k$.
	 We have
	\begin{align}\label{l1}
		A \otimes_R (R \otimes_k L) = (R \otimes_k L)^{[2]}. 
	\end{align}
	Since $R \subseteq R \otimes_k L$ is an integral extension, there exists a $\p \in \Spec(R \otimes_k L)$ such that $\p \, \cap\, R= (0)$. Therefore $R \subseteq \frac{R \otimes_k L}{\p}$ is a module-finite extension of integral domains.
	Let $K_1$ and $K_2$ denote the field of fractions of $R$ and $ \frac{R \otimes_k L}{\p}$, respectively.
	Since $ \frac{R \otimes_k L}{\p}$ is a finite $R$-module, $ K_2$ is a finite field extension of $K_1$. Now suppose that $\tilde{R}$ be the integral closure of $R$ in $K_2$. Since $\frac{R \otimes_k L}{\p} \subseteq \tilde{R}$, from \eqref{l1}, it follows that $A \otimes_R \tilde{R} = \tilde{R}^{[2]}$. Now, by \thref{trivialfib}, we get the desired result.

	\smallskip
	\noindent
	{\it Case} 2: 
	 Suppose $L$ is not algebraic over $k$.
	 Let $x_1,\ldots,x_n \in L$ be algebraically independent elements over $k$ such that $\tilde{k}:=k(x_1,\ldots,x_n)$ and $L=\tilde{k}[\alpha_1,\ldots,\alpha_n]$ for some algebraic elements $\alpha_1,\ldots,\alpha_n$ over $\tilde{k}$. 
	We now have 
	\begin{align*}
		(A \otimes_k k(x_1,\ldots,x_n)[\alpha_1,\ldots,\alpha_n]) =(R \otimes_k k(x_1,\ldots,x_n)[\alpha_1,\ldots,\alpha_n])^{[2]}.
	\end{align*}
 Since $A$ is a finitely generated $R$-algebra, there exists $f \in k[x_1,\ldots,x_n] (=k^{[n]})$ such that
 	\begin{align}\label{a1}
 	\left(A \otimes_k k\left[x_1,\ldots,x_n,\frac{1}{f},\alpha_1,\ldots,\alpha_n\right]\right) =\left(R \otimes_k k\left[x_1,\ldots,x_n, \frac{1}{f},\alpha_1,\ldots,\alpha_n\right]\right)^{[2]}.
 \end{align} 
Let $\m$ be a maximal ideal of $E:= k\left[x_1,\ldots,x_n,\frac{1}{f},\alpha_1,\ldots,\alpha_n\right]$and $\hat{k}:= \frac{E}{\m}$. Then 
from $\eqref{a1}$, 
\begin{equation}\label{a2}
	A \otimes_k \hat{k} =(R \otimes_k \hat{k})^{[2]}.
\end{equation}
 Note that $\hat{k}$ is algebraic over $k$.
Therefore, the result follows by Case 1.
\end{proof}
We now deduce Theorem D.

\begin{thm}\thlabel{corform}
	Let $k$ be a field and $R$ be a Dedekind domain containing $k$. If $A$ is an $\A^2$-fibration over $R$ which is also an $\A^2$-form over $R$ with respect to $k$, then $A= (\rm{Sym}_R(I))^{[1]}$, where $I$ is an invertible ideal of $R$. In particular, if $R$ is a PID, then $A=R^{[2]}$.
\end{thm}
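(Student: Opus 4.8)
The plan is to reduce the global statement over a Dedekind domain $R$ to the already-proved DVR case in \thref{pform2}, using the structure theory of \thref{bcw}. First I would observe that $A$ is a finitely presented $R$-algebra (it is a finitely generated flat algebra over a Noetherian ring, since $R$ is Dedekind and $A$ is an $\A^2$-fibration; finite generation comes from the $\A^2$-form hypothesis via \thref{r1}, and flatness plus finite generation over Noetherian gives finite presentation). Next, for every maximal ideal $\m$ of $R$, the localization $R_\m$ is a DVR containing $k$, and $A_\m = A\otimes_R R_\m$ is an $\A^2$-fibration over $R_\m$ which is still an $\A^2$-form with respect to $k$ (tensoring the defining isomorphism $A\otimes_k\overline{k}=(R\otimes_k\overline{k})^{[2]}$ with $R_\m$, or rather noting that $A_\m\otimes_k L=(R_\m\otimes_k L)^{[2]}$ follows from the corresponding identity for $A$). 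Hence by \thref{pform2} (applied with the DVR $R_\m$ in place of $R$), we get $A_\m = R_\m^{[2]}$ for every maximal ideal $\m$ of $R$.

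Then I would invoke \thref{bcw}: since $A$ is finitely presented over $R$ and $A_\m = R_\m^{[2]}$ for all maximal $\m$, we conclude $A\cong \operatorname{Sym}_R(M)$ for a finitely generated projective $R$-module $M$ of rank $2$. Over a Dedekind domain, every finitely generated projective module is isomorphic to $R^{n-1}\oplus I$ for some invertible ideal $I$ (Steinitz), so $M \cong R\oplus I$ with $I$ an invertible ideal of $R$. Therefore $A\cong \operatorname{Sym}_R(R\oplus I) = (\operatorname{Sym}_R(I))^{[1]}$, which is the asserted structure. Finally, if $R$ is a PID then every invertible ideal is principal, so $M$ is free of rank $2$ and $A\cong \operatorname{Sym}_R(R^2) = R^{[2]}$.

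I do not expect a genuine obstacle here — the theorem is essentially a "globalization" of \thref{pform2} via faithfully flat (or rather local-to-global) descent of triviality for symmetric algebras, plus Steinitz's classification over Dedekind domains. The one point requiring a little care is verifying the hypotheses of \thref{bcw}: namely that $A$ is finitely \emph{presented} (not merely finitely generated) over $R$, which I would justify by the Noetherianity of $R$ together with flatness of $A$; and confirming that the $\A^2$-form property localizes, i.e. that $A_\m$ is an $\A^2$-form over $R_\m$ with respect to $k$. Both are routine: the former is standard commutative algebra, and the latter follows by tensoring the $\A^2$-form identity for $A$ (over a finitely generated field extension $\tilde L$ of $k$, available by \thref{r1}) with $R_\m$ over $R$, using that localization commutes with the relevant tensor products.
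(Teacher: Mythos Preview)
Your proposal is correct and follows essentially the same route as the paper: localize at each maximal ideal, apply \thref{pform2} to the resulting DVR, invoke \thref{bcw} to get $A\cong \operatorname{Sym}_R(M)$ with $M$ projective of rank~$2$, then use the Steinitz decomposition $M\cong R\oplus I$ over a Dedekind domain. You are in fact slightly more careful than the paper in spelling out why $A$ is finitely presented (needed for \thref{bcw}) and why the $\A^2$-form hypothesis localizes; note also that finite generation of $A$ is already part of the paper's definition of an $\A^2$-fibration, so you need not invoke \thref{r1} for that.
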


\begin{proof}
	Let $\p \in \Spec(R)$. Since $A$ is an $\A^2$-fibration and $\A^2$-form over $R$ with respect to $k$, it follows that $A_{\p}:=A \otimes_R R_{\p}$ is an $\A^2$-fibration over $R_{\p}$ and $\A^2$-form over $R_{\p}$ with respect to $k$. Since $R_{\p}$ is a DVR, by \thref{pform2}, we have $A_{\p}=R_{\p}^{[2]}$. 
	Therefore, by \thref{bcw}, $A= \rm{Sym}_R(Q)$ for some finitely generated projective $R$-module $Q$ of rank 2.
	Now, $Q=I \oplus R$ where $I$ is an invertible ideal of $R$ (cf. \cite[Theorem 7.1.8]{rao}). 
	Therefore, $A=(\rm{Sym}_R(I))^{[1]}$. 
	
	If $R$ is PID, then $Q$ is free, and hence $A=R^{[2]}$. 
\end{proof}


We now prove Theorem E. Recall that a field extension $L$ over a field $k$ is said to be separable if, for every field extension $k^{\prime}$ of $k$, $L \otimes_k k^{\prime}$ is a reduced ring. 
  \begin{thm}\thlabel{perfect}
  	Let $k$ be a field, $R$ a Dedekind domain containing $k$. If there exists a separable field extension $L$ of $k$ such that $A \otimes_k L= (R \otimes_k L)^{[2]}$, then $A= (\rm{Sym}_R(I))^{[1]}$, where $I$ is an invertible ideal of $R$. In particular, if $R$ is a PID, then $A=R^{[2]}$.
  \end{thm}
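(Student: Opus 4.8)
The plan is to reduce Theorem~E (\thref{perfect}) to Theorem~D (\thref{corform}) by showing that the hypotheses of separability force $A$ to be an $\A^2$-fibration over $R$; the geometric conclusion about $\mathrm{Sym}_R(I)$ then follows verbatim from \thref{corform}. So the first step is: assume $L$ is a separable field extension of $k$ with $A \otimes_k L = (R \otimes_k L)^{[2]}$. By \thref{r1} we may replace $L$ by a finitely generated (still separable) subextension $\tilde L$ of $k$, so that $A$ is a finitely generated $R$-algebra and $A \otimes_k \tilde L = (R \otimes_k \tilde L)^{[2]}$.

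Next I would verify that $A$ is flat over $R$ and that $A \otimes_R \kappa(\p) = \kappa(\p)^{[2]}$ for every $\p \in \Spec(R)$, i.e.\ that $A$ is an $\A^2$-fibration. Flatness: $A \otimes_k \tilde L$ is a polynomial ring over $R \otimes_k \tilde L$, hence flat over $R \otimes_k \tilde L$, which is faithfully flat over $R$ (it is free as an $R$-module since $\tilde L$ is a free $k$-module); so $A \otimes_k \tilde L$ is flat over $R$, and since $\tilde L$ is faithfully flat over $k$, faithfully flat descent gives that $A$ is flat over $R$. For the fiber condition, fix $\p \in \Spec(R)$ with residue field $F := \kappa(\p)$. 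Base changing the given identity along $R \to F$ yields $(A \otimes_R F) \otimes_k \tilde L = (F \otimes_k \tilde L)^{[2]}$. The point is now that $F \otimes_k \tilde L$ need not be a field, but because $\tilde L/k$ is separable it is a \emph{reduced} ring, in fact a finite-dimensional-over-$F$ (or more generally localization-friendly) étale-type $F$-algebra; localizing at a minimal prime gives a field $F'$ which is a separable extension of $F$, and base changing along $F \otimes_k \tilde L \to F'$ shows that $A \otimes_R F$ is a \emph{separable $\A^2$-form over $F$}. By Theorem~B (\thref{aeh}'s companion, i.e.\ Kambayashi's theorem quoted as Theorem~B) a separable $\A^2$-form over a field is a polynomial ring, so $A \otimes_R F = F^{[2]}$. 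Hence $A$ is an $\A^2$-fibration over $R$.

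Having established that $A$ is simultaneously an $\A^2$-fibration over $R$ and an $\A^2$-form over $R$ with respect to $k$ (the latter being exactly the hypothesis $A \otimes_k L = (R \otimes_k L)^{[2]}$, noting $\bar k \supseteq L$ may require a short argument that an $\A^2$-form over a separable extension remains an $\A^2$-form over $\bar k$ — this follows since $(R\otimes_k L)^{[2]} \otimes_L \bar k = (R \otimes_k \bar k)^{[2]}$), we can invoke \thref{corform} directly to conclude $A = (\mathrm{Sym}_R(I))^{[1]}$ for an invertible ideal $I$ of $R$, and $A = R^{[2]}$ when $R$ is a PID.

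The main obstacle I anticipate is the fiber computation: one must handle the fact that $F \otimes_k \tilde L$ is a product/localization of fields rather than a single field, and carefully extract from it a single separable field extension $F'$ of $F$ witnessing $A \otimes_R F$ as an $\A^2$-form, so that Theorem~B applies. This is where separability of $L/k$ is genuinely used — without it $F \otimes_k \tilde L$ could be non-reduced and the residue fields need not be separable over $F$, which is precisely why non-perfect fields admit non-trivial $\A^2$-forms over PIDs. A secondary, more routine point is the faithfully-flat-descent argument for flatness of $A$ over $R$, and checking finite generation via \thref{r1}; these are standard but should be spelled out.
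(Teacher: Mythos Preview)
Your overall strategy coincides with the paper's: establish that $A$ is an $\A^2$-fibration over $R$ by checking each fibre via Kambayashi, then invoke \thref{corform}. The flatness argument you sketch is fine (the paper simply asserts it), and the fibrewise reduction you describe is exactly the paper's Case~1.

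There is, however, a genuine gap in your handling of the fibre condition when the finitely generated separable extension $\tilde L/k$ is \emph{transcendental}. In that situation your field $F' = (F \otimes_k \tilde L)_{\q}$ is indeed separable over $F$, but it is transcendental over $F$ as well (e.g.\ for $\tilde L = k(t)$ and $F$ algebraic over $k$ one gets $F' = F(t)$). Kambayashi's theorem, as stated and used here (Theorem~B), applies to forms trivialised by a separable \emph{algebraic} extension; it does not directly give $A\otimes_R F = F^{[2]}$ from $(A\otimes_R F)\otimes_F F' = F'^{[2]}$ with $F'/F$ transcendental. The paper closes this gap with an explicit specialisation step (its Case~2): write $\tilde L = k(x_1,\dots,x_n)[\alpha]$ with a separating transcendence basis, spread out the identity $A\otimes_k \tilde L = (R\otimes_k \tilde L)^{[2]}$ to a finitely generated $k$-algebra $E = k[x_1,\dots,x_n,1/f,\alpha]$, and then choose a maximal ideal $\m$ of $E$ whose residue field $\hat k = E/\m$ is a \emph{finite separable} extension of $k$ (immediate if $k$ is finite; for infinite $k$ one must avoid the locus where the minimal polynomial of $\alpha$ becomes inseparable). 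This reduces to Case~1, where your argument goes through verbatim.

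A smaller point: your parenthetical ``noting $\bar k \supseteq L$'' is false precisely when $L$ is transcendental over $k$, so the claim that $A$ is an $\A^2$-form over $R$ in the paper's sense ($A\otimes_k \bar k = (R\otimes_k\bar k)^{[2]}$) needs more care. This is harmless in the end, since the proof of \thref{corform} really only uses \thref{pform2}, whose hypothesis is $A\otimes_k L = (R\otimes_k L)^{[2]}$ for \emph{some} extension $L$, not necessarily $\bar k$.
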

  	\begin{proof}
  			By \thref{r1} and the fact that any subfield of a separable extension is separable, we can assume that $L$ is a finitely generated separable field extension of $k$. 
  			   
     \smallskip
     \noindent
     {\it Case} 1:
     We first assume that $L$ is an algebraic extension of $k$. 
  	Since
  		\begin{equation}\label{b1}
  		A \otimes_k L= (R \otimes_k L)^{[2]},
  	\end{equation}
  we have
   	   \begin{equation}\label{b}
   	   	 \frac{A_{\p}}{\p A_{\p}}\otimes_{\kappa(\p)} (\kappa(\p) \otimes_k L)= (\kappa(\p) \otimes_k L)^{[2]},
   	   \end{equation}
      for every $\p \in \Spec(R)$.
  	 	As $L$ is a finite separable field extension of $k$, $\kappa(\p) \otimes_k L= \prod_{i=1}^{n} L_i$, where $L_i$ is a finite separable field extension of $\kappa(\p)$. Therefore, from \eqref{b}, 
  	 	$$
  	 	\frac{A_{\p}}{\p A_{\p}}\otimes_{\kappa(\p)} L_i= L_i^{[2]},
  	 	$$ 
  	 	 and hence, by Kambayashi's Theorem (Theorem B), it follows that $\frac{A_{\p}}{\p A_{\p}}=\kappa(\p)^{[2]}$. 
  	 	As $A$ is an $\A^2$-form over $R$ and hence flat over $R$,
  	 	  $A$ is an $\mathbb{A}^2$-fibration over $R$. 
         Now the assertion follows by \thref{corform}. 
         
         \smallskip
         \noindent
         {\it Case} 2:
         Now suppose $L$ is not algebraic over $k$. Therefore,
         $L=k(x_1,\ldots,x_n)[\alpha]$ for some $x_1,\ldots,x_n,\alpha \in L$ such that $\{x_1,\ldots,x_n\}$ is a separating transcendence basis for $L$ over $k$, and $\alpha$ is algebraic over $k(x_1,\ldots,x_n)$ (cf. \cite[Theorem 26.2]{matr}).
         Since $A$ is a finitely generated $R$-algebra, from \eqref{b1}, it is clear that there exists $f \in k[x_1,\ldots,x_n] (=k^{[n]})$ such that
         \begin{equation}\label{b2}
         	A \otimes_k k\left[ x_1,\ldots,x_n, \frac{1}{f}, \alpha \right] = \left( R \otimes_k k\left[ x_1,\ldots,x_n, \frac{1}{f}, \alpha \right]  \right)^{[2]}.
         \end{equation}
         Set $E:=k\left[ x_1,\ldots,x_n, \frac{1}{f}, \alpha \right]$.
      If $k$ is a finite field, then for any maximal ideal $\m$ of $E$, $\tilde{k}:=\frac{E}{\m}$ is a finite separable field extension of $k$. 
         Now, from \eqref{b2}, $A\otimes_k \tilde{k} =(R \otimes_k \tilde{k})^{[2]}$, and hence the result follows by Case 1.
         
         Now suppose that $k$ is an infinite field. Set $\hat{k}:=k(x_1,\ldots,x_n)$. 
         Let $P= \sum_{i=0}^{n} g_iY^{i}\in \hat{k}[Y](=\hat{k}^{[1]})$, where $g_n=1$, $g_i \in \hat{k}$, $0 \leqslant i \leqslant n-1$, be the irreducible monic polynomial such that $P(\alpha)=0$. Set $P_Y:=\sum_{i=1}^{n} i g_i Y^{i-1}$.
         As $P$ is a separable polynomial, $\gcd (P,P_Y)=1$, and hence there exist $Q_1,Q_2 \in \hat{k}[Y]$ such that 
         \begin{equation}\label{p}
         	Q_1 P+Q_2 P_Y=1. 
         \end{equation}
         Let $\beta_1,\ldots,\beta_r \in k[x_1,\ldots,x_n] (=k^{[n]})$ denote the denominators of the coefficients of $P,Q_1,Q_2$ in $\hat{k}$. 
           Set $E_1:=k\left[ x_1,\ldots,x_n, \frac{1}{f\beta_1 \cdots \beta_r} \right]$. 
         Since $k$ is an infinite field, there exist a maximal ideal $\eta=(x_1-\lambda_1,\ldots,x_n-\lambda_n)$ of $k[x_1,\ldots,x_n]$ such that $f \beta_1 \cdots \beta_r \notin \eta$. 
         
%
    Let $\m_1$ be a maximal ideal of $E_1[\alpha]$ such that $\eta E_1[\alpha] \subseteq \m_1$. Therefore, from \eqref{p}, $k^{\prime}:= \frac{E_1[\alpha]}{\m_1}$ is a finite separable field extension of $k$. Now, from \eqref{b2}, we have 
    $$
    A \otimes_k k^{\prime} =(R \otimes_k k^{\prime})^{[2]},
    $$
    and hence the result follows by Case 1.
  \end{proof}

\section*{Acknowledgement}
The authors thank Professor Amartya K. Dutta for carefully going through the earlier draft and giving valuable suggestions. 
The second author acknowledges Department of Science and Technology (DST), India for their INDO-RUSS project \\(DST/INT/RUS/RSF/P-48/2021).

{\small{

}
}
\end{document}